\newtheorem{prop}{Proposition}%
\theoremstyle{definition}
\theoremstyle{remark}
\theoremstyle{plain}
\def\EE{{\mathbb E}}
\def\NN{{\mathbb N}}
\def\PP{{\mathbb P}}
\def\RR{{\mathbb R}}
\def\ZZ{{\mathbb Z}}
\def\curA{{\mathscr A}}
\def\scrM{{\mathcal M}}
\def\scrN{{\mathcal N}}
\def\L{\operatorname{L{}}}
\def\Leb{\operatorname{Leb}}
\def\Prob{\operatorname{Prob}}
\newcommand{\ind}[1]{\ensuremath{{\mathbbm{1}}{\big(#1\big)}}}
\def \toas {\,\,\buildrel\text{\rm a.s.}\over\longrightarrow\,\,}
\def \todist {\,\,\buildrel\text{\rm d}\over\longrightarrow\,\,}
\def \toweak {\,\,\buildrel\text{\rm w}\over\longrightarrow\,\,}
\def \eqdist {\buildrel\text{\rm d}\over =}
\def\time{R}
\title{Entry and return times for semi-flows}
\author{Jens Marklof}
\address{Jens Marklof, School of Mathematics, University of Bristol, Bristol BS8 1TW, U.K.\newline \rule[0ex]{0ex}{0ex} \hspace{8pt}{\tt j.marklof@bristol.ac.uk}}
\date{20 May 2016/25 October 2016. To appear in Nonlinearity.}
\thanks{The research leading to these results has received funding from the European Research Council under the European Union's Seventh Framework Programme (FP/2007-2013) / ERC Grant Agreement n. 291147.}
\subjclass[2010]{37A10, 60G55}
\begin{document}

\begin{abstract}
Haydn, Lacroix and Vaienti [Ann. Probab. 33 (2005)] proved that, for a given ergodic map, the entry time distribution converges in the small target limit, if and only if the corresponding return time distribution converges. The present note explains how entry and return times can be interpreted in terms of stationary point processes and their Palm distribution. This permits a generalization of the results by Haydn et al.\ to non-ergodic maps and continuous-time dynamical systems.
\end{abstract}

\maketitle

\section{Introduction}

There is currently much interest in understanding the entry and return time distributions of various classes of dynamical systems, in particular in the case of small target sets; cf.~\cite{Abadi11,Chazottes13,Freitas10,Freitas13,Freitas14,Freitas16,Haydn13,Haydn14a,Haydn14b,Lucarini16,Rousseau14} and references therein. These statistics may be viewed as quantitative refinements of the classical Poincar\'e recurrence, as pointed out by Kac in his study of return times for ergodic maps and discrete stochastic processes \cite{Kac47}. We will here review the relationship between the entry and return time distributions for dynamical systems, 
and relate them to known facts for stationary point processes and their Palm distribution. This yields in particular an extension of the results by Haydn et al.\ \cite{Haydn05} on the convergence of entry and return time distributions, as well as the processes generated by them \cite{CK,Zweimueller16}, to continuous-time dynamical systems with no assumption on ergodicity. 

The plan for this paper is as follows. Rather than dealing with discrete-time systems, we consider their suspension semi-flows with general integrable roof functions (Section \ref{sec:suspensions}). This gives a natural interpretation of the entry and return times to a given set in terms of a random point process on the positive real line (Section \ref{sec:point}). Invariance of the measure under the semi-flow  implies stationarity of the process, which can be naturally extended to a stationary point process on the full real line (Section \ref{sec:stationarity}).  The return times are obtained as the entry times corresponding to the Palm distribution of the original point process. The formula found by Haydn et al.\ \cite{Haydn05} follows from a fundametal identity in Palm distribution theory (Section \ref{sec:palm}). This identity is a close relative of Santalo's formula for the free path length in billiards and other geometric settings \cite{Chernov97,Santalo04,Stoyanov16}, and yields a transparent proof of Kac's formula (Eq.~\eqref{eq:KacX} in Section \ref{sec:palm}) for the mean return time \cite{Kac47,Meiss97,Wright61}. It also allows us to prove the equivalence of the convergence of entry and return time probabilities for sequences of target sets and dynamical systems, thus extending \cite{Haydn05} (see also \cite{Abadi11,CK,Haydn07,Haydn14b,Zweimueller16}) to non-ergodic maps and semi-flows  (Section~\ref{sec:shrinking}). The general equivalence of convergence-in-distribution of the point processes generated by the entry times and return times, respectively, follows from convergence results for Palm distributions in \cite{Kallenberg73}; see Section \ref{sec:shrinking} for more details. 

The connection between Ambrose's representation of flows as suspensions \cite{Ambrose41,Ambrose42} and the Palm distributions of stationary point processes was first pointed out by Papangelou \cite{Papangelou70} and Hanen \cite{Hanen71}; see also \cite{Benveniste75,deSam75,Geman75}.
The relationship between entry and return time distributions and their convergence has been exploited in numerous applications. These include the limit distributions for gap statistics of number-theoretic sequences \cite{Athreya16,Athreya14,Elkies04,Marklof07,Marklof13,gaps,vis,hyper}, the role of Palm distributions in the statistics of quantum energy levels \cite{Minami07},
spectra of quantum graphs \cite{Barra00}, gap times in unimolecular reaction rates \cite{Ezra09}, and the Boltzmann-Grad limit of the periodic Lorentz gas \cite{Boca07,Caglioti10,partI}. 

{\em Acknowledgements:} I thank Carl Dettmann, Omri Sarig, Andreas Str\"ombergsson, Mike Todd, Corinna Ulcigrai, Steve Wiggins and Roland Zweim\"uller for their comments on the first drafts of this paper.

\section{Suspensions}\label{sec:suspensions}

Following \cite{Ambrose41,Ambrose42,Krengel69} we may represent any given measurable semi-flow $\varphi^t$ as the suspension over a suitable map $T$ as follows. 
Let $(Y,\curA,\mu)$ be a probability space and $T:Y\to Y$ a measurable map preserving $\mu$. Fix a function $r\in\L^1(Y,\mu)$ such that $\inf r>0$. Let $X=\{ (y,s) \mid y\in Y,\; 0\leq s < r(y)\}$, and set
\begin{equation}
S_0(y)=0,\qquad S_n(y) =\sum_{j=0}^{n-1} r(T^j y) \quad (n\geq 1).
\end{equation}
The {\em suspension} (also called {\em special flow} or {\em flow under a function}) of $T$ with {\em roof function} $r$ is the semi-flow defined for $t\geq 0$ by
\begin{equation}
\varphi^t : X\to X, \qquad (y,s)\mapsto (T^n y,s+t-S_n(y)) 
\end{equation}
if $S_n(y)\leq s+t <S_{n+1}(y)$, $n=0,1,2,\ldots$.
The corresponding $\varphi^t$-invariant probability measure $\nu$ on $X$ is given by
\begin{equation}\label{Santalo}
\int_X f(x) \nu(dx) = \frac{1}{\overline r}\int_Y \bigg( \int_0^{r(y)} f((y,s)) ds \bigg) \mu(dy)
\end{equation}
for every measurable $f:X\to\RR_{\geq 0}$, where
\begin{equation}
\overline r:= \mu r :=\int_Y r(y)\mu(dy).
\end{equation} 
In various geometric contexts (such as billiard dynamics or geodesic flows), identities analogous to \eqref{Santalo} are often referred to as Santalo's formula \cite{Chernov97,Santalo04,Stoyanov16}.

Given $x\in X$, $D\in\curA$, we define the sequence of entry/return times $0< t_1<t_2<\ldots$ as the elements of the (possibly empty) set
\begin{equation}
H(x,D) = \{ t> 0 \mid \varphi^t x \in D \times \{0\} \} .
\end{equation}
We call $t_j=t_j(x)$ the $j$th entry time if $x\notin D \times \{0\}$, and the $j$th return time if $x\in D \times \{0\}$.
Note that $t_{j+1}-t_j\geq \inf r$ for all $j$. We first discuss the entry and return time distribution for fixed $D$, then (in Section \ref{sec:shrinking}) for a sequence of target sets $D_n$ and, more generally, sequences of suspension semi-flows.

\section{Point processes}\label{sec:point}

Let $\scrM(\RR_{>0})$ (resp.\ $\scrM(\RR)$) be the space of locally finite Borel measures on $\RR_{>0}$ (resp.\ $\RR$), equipped with the vague topology. We denote by $\scrN(\RR_{>0})$ (resp.\ $\scrN(\RR)$) the closed subset of integer-valued measures, i.e., the set of $\zeta$ such that $\zeta B\in\ZZ\cup\{\infty\}$ for any Borel set $B$.
A {\em point process} on $\RR_{>0}$ (resp. $\RR$) is a random measure in $\scrN(\RR_{>0})$ (resp.\ $\scrN(\RR)$). Given an integer-valued measure $\zeta$, we can write
\begin{equation}
\zeta=\sum_j \delta_{\tau_j(\zeta)}
\end{equation}
where the $\tau_j$ are real-valued functions of $\zeta$. We use the convention that  $\tau_j\leq \tau_{j+1}$, and $\tau_0\leq 0 < \tau_1$ if there are non-positive $\tau_j$. A point process is called simple if $\sup_{t} \zeta\{t\}\leq 1$ a.s.

The push-forward $P_*\nu$ of the invariant measure $\nu$ under the measurable map
\begin{equation}\label{Px}
P: X \to \scrM(\RR_{>0}), \qquad x\mapsto  \xi=\sum_{t\in H(x,D)} \delta_t  = \sum_j \delta_{t_j(x)},
\end{equation}
defines a simple point process $\xi$. This process depends on the choice of $D\in\curA$. The $j$th entry time of the suspension is precisely given by $t_j(x)=\tau_j(\xi)$, where $x$ is distributed according to $\nu$. We will see in Proposition \ref{prop:see} that the point process $P(y,0)$ given by the return times $t_j(y,0)$, with $y\in D$ distributed according to the probability measure $\mu(D)^{-1} \mu$, has the same distribution as a point process $\eta$ given by the Palm distribution of $\xi$.

\section{Stationarity} \label{sec:stationarity}

For $u\in\RR_{\geq 0}$, define the shift operator $\theta^u$ on $\scrM(\RR_{>0})$ by $\theta^u\zeta B=\zeta(B+u)$ for every Borel set $B\subset\RR_{>0}$. If $\zeta\in\scrN(\RR_{>0})$, we have
\begin{equation}
\zeta= \sum_{j=1}^\infty \delta_{\tau_j(\zeta)}, \qquad \theta^u\zeta= \sum_{j=j_0}^\infty \delta_{\tau_j(\zeta)-u},
\end{equation}
where $j_0 = \min\{ j \mid \tau_j-u >0 \} $. We say a random measure $\zeta\in\scrM(\RR_{>0})$ is stationary if $\theta^u\zeta\eqdist \zeta$ for all $u\in\RR_{\geq 0}$. 
Similarly, for $u\in\RR$, define the shift operator $\theta^u$ on $\scrM(\RR)$ by $\theta^u\zeta B=\zeta(B+u)$ for every Borel set $B\subset\RR$. We say a random measure $\zeta$ in $\scrM(\RR)$ is stationary if $\theta^u\zeta\eqdist \zeta$ for all $u\in\RR$. 
If $\zeta_0$ in $\scrM(\RR_{>0})$ is a stationary random measure, then (with $\zeta_0$ now viewed as an element in $\scrM(\RR)$) the limit $\theta^u\zeta_0 \todist \zeta$ ($u\to\infty$) exists  in $\scrM(\RR)$ and defines a stationary random measure, whose restriction to $\scrM(\RR_{>0})$ has the same distribution as $\zeta_0$. We call $\zeta$ the {\em stationary extension} of $\zeta_0$ to $\RR$. 

The intensity measure of a stationary random measure $\zeta$ is $I_\zeta \Leb$, where the intensity is given by
\begin{equation}
I_\zeta = \frac{\EE \zeta(0,\time]}{\time},
\end{equation}
which, by stationarity, is independent of the choice of $\time>0$. 

\begin{prop}\label{prop:stat}
The simple point process $\xi=P(x)$, with $x\in X$ distributed according to $\nu$, is stationary on $\RR_{>0}$ with intensity $I_\xi=\overline r^{-1} \mu(D)$. 
\end{prop}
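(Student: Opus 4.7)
The plan is to verify stationarity first and then derive the intensity from it.

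For stationarity, the main step is the pointwise intertwining relation $P\circ\varphi^u=\theta^u\circ P$ on $X$. I would derive this directly from the definitions: $t\in H(\varphi^u x,D)$ iff $t>0$ and $\varphi^{t+u}x\in D\times\{0\}$, which is equivalent to $t+u\in H(x,D)$ with $t+u>u$. Hence
\begin{equation*}
H(\varphi^u x,D) = \bigl(H(x,D)\cap (u,\infty)\bigr)-u,
\end{equation*}
which is exactly how $\theta^u$ acts on the point process $P(x)$ defined in \eqref{Px}. Combined with the $\varphi^u$-invariance of $\nu$, this yields $\theta^u\xi = P(\varphi^u x)\eqdist P(x)=\xi$, so $\xi$ is stationary on $\RR_{>0}$.

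Once stationarity is established, $\EE\xi(0,R]=R\,I_\xi$ for every $R>0$, so it suffices to evaluate $\EE\xi(0,R]$ at one convenient value of $R$. I would choose $R<\inf r$. Substituting the definition of $\nu$ from \eqref{Santalo} gives
\begin{equation*}
\EE\xi(0,R] = \frac{1}{\overline r}\int_Y\int_0^{r(y)} \#\bigl\{n\geq 1 : T^n y\in D,\; 0<S_n(y)-s\leq R\bigr\}\,ds\,\mu(dy).
\end{equation*}
Interchanging the sum and integral and substituting $u=S_n(y)-s$ in the inner integral (so that $u$ ranges over $(S_n(y)-r(y),S_n(y)]=(S_{n-1}(Ty),S_n(y)]$, using the identity $S_n(y)-r(y)=S_{n-1}(Ty)$), the $n$th contribution becomes $\int_{S_{n-1}(Ty)}^{S_n(y)}\mathbbm{1}(0<u\leq R)\,du$. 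For $n\geq 2$ one has $S_{n-1}(Ty)\geq r(Ty)\geq\inf r>R$, so this vanishes; for $n=1$ the range is $(0,r(y)]$ and the integral equals $R$. Therefore
\begin{equation*}
\EE\xi(0,R] = \frac{R}{\overline r}\int_Y\mathbbm{1}(Ty\in D)\,\mu(dy) = \frac{R\,\mu(D)}{\overline r},
\end{equation*}
by the $T$-invariance of $\mu$. Comparing with $\EE\xi(0,R]=R\,I_\xi$ gives $I_\xi=\mu(D)/\overline r$.

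I do not expect a serious obstacle: the intertwining identity and the change of variables are routine, and the only point requiring care is the observation that for $R<\inf r$ only the index $n=1$ survives in the sum. If one preferred to avoid invoking stationarity in the intensity calculation, an alternative would be the ``dwell-time trick'' of comparing $\int_0^R\mathbbm{1}_{D\times[0,\epsilon)}(\varphi^t x)\,dt$, evaluated both via $\varphi^t$-invariance and via counting entries, and extracting $I_\xi$ in the limit $\epsilon\to 0$; but the direct calculation above is cleaner.
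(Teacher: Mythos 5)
Your proof is correct. The stationarity half coincides with the paper's argument: the same intertwining identity $H(\varphi^u x,D)=(H(x,D)-u)\cap\RR_{>0}$ combined with the $\varphi^u$-invariance of $\nu$. The intensity half, however, takes a genuinely different route. The paper approximates the counting function $\#\{j: 0<t_j(x)\leq \time\}$ by the occupation time $\tfrac1\epsilon\int_0^\time \ind{\varphi^t(x)\in D\times[0,\epsilon]}\,dt$ up to an error of at most $1$, evaluates the expectation using $\varphi^t$-invariance of $\nu$ and the formula \eqref{Santalo} for $\nu(D\times[0,\epsilon])$, and then lets $\time\to\infty$ to kill the $O(\time^{-1})$ error. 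You instead exploit stationarity to reduce to a single small window $\time<\inf r$, unfold $\EE\xi(0,\time]$ exactly via \eqref{Santalo}, and observe after the substitution $u=S_n(y)-s$ (using $S_n(y)-r(y)=S_{n-1}(Ty)$) that only the $n=1$ term survives, so that $T$-invariance of $\mu$ gives $\mu(T^{-1}D)=\mu(D)$ and the answer is exact with no error term. Your computation is cleaner in that it avoids both the $\epsilon$-thickening and the limit $\time\to\infty$, at the cost of leaning on the explicit combinatorics of the suspension (the identification of the entry times with $S_n(y)-s$); the paper's occupation-time argument is more robust in spirit, since it only uses flow-invariance of $\nu$ and the measure of a thin collar around the section, which is the form that generalizes to sections of flows not presented explicitly as suspensions. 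Both arguments are complete and correct.
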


\begin{proof}
Since $\nu$ is $\varphi^u$-invariant, we have for any $u\geq 0$
\begin{equation}
\xi= \sum_{t\in H(x,D)} \delta_t\eqdist \sum_{t\in H(\varphi^u x,D)} \delta_t 
= \theta^u\xi,
\end{equation}
where the last relation follows from
\begin{align}
H(\varphi^u x,D) & = \{ t> 0 \mid \varphi^{t+u} x \in D \times \{0\} \} \\
& = \{ t> u \mid \varphi^t x \in D \times \{0\} \} - u \\
& =(H( x,D) - u)\cap\RR_{>0} .
\end{align}
This shows that $\xi$ is stationary on $\RR_{>0}$. Its intensity is 
\begin{equation}
I_\xi = \EE  \frac{\#\{ j \mid 0< \tau_j(\xi) \leq \time \}}{\time} =\int_X \frac{\#\{ j \mid 0< t_j(x) \leq \time \}}{\time} \nu(dx). 
\end{equation}
Now, for any $\epsilon\in(0,\inf r)$,
\begin{equation}\label{fourteen}
\#\{ j \mid 0< t_j(x) \leq \time \} = \frac1\epsilon \int_0^\time \ind{\varphi^t(x)\in D\times [0,\epsilon]} dt + E_\time(x)
\end{equation}
where $|E_\time(x)|\leq 1$. Since $\varphi^t$ preserves $\nu$, we have
\begin{align}
\int_X \ind{\varphi^t(x)\in D\times [0,\epsilon]} \nu(dx) & = \nu(\varphi^{-t}(D\times [0,\epsilon])) \\
& = \nu(D\times [0,\epsilon]) 
= \epsilon\,  \overline r^{-1} \mu(D).
\end{align}
We conclude $I_\xi=\overline r^{-1} \mu(D) +O(\time^{-1})$ for any $\time>0$.
Taking $\time\to\infty$ proves that $I_\xi=\overline r^{-1} \mu(D)$.
\end{proof}

Note that a random measure and its stationary extension have the same intensity, so in particular if $\xi$ is the stationary extension of $P(x)$ to $\RR$, then $I_\xi=\overline r^{-1} \mu(D)$.
Let
\begin{equation}\label{Z1}
Z_D^1=\{ y\in Y \mid T^j y \in D \text{ for some $j\in \NN$}\} = \bigcup_{j=1}^\infty T^{-j} D
\end{equation}
be the set of points in $Y$ whose forward orbit intersects $D$. 

\begin{prop}\label{prop:as}
Let $\xi$ be the stationary extension of $P(x)$ to $\RR$, with $x\in X$ distributed according to $\nu$. Then
\begin{equation}\label{Z11}
\PP(\xi\neq 0) = \mu(Z_D^1).
\end{equation}
\end{prop}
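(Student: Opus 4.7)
The plan is to reduce the shift-invariant tail event $\{\xi\neq 0\}$ on $\scrM(\RR)$ to an event on $X$ via stationarity, and then evaluate the latter using Santalo's formula \eqref{Santalo}.

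The first step is the identity $\PP(\xi\neq 0)=\PP(P(x)\neq 0)$. I would argue that a stationary point process on $\RR$ with $\PP(\xi\neq 0)>0$ has support unbounded above a.s.\ on $\{\xi\neq 0\}$: the quantity $T_{\max}(\xi)=\sup_j\tau_j(\xi)\in\RR\cup\{\pm\infty\}$ is shift-equivariant, $T_{\max}(\theta^u\xi)=T_{\max}(\xi)-u$, so stationarity forces its distribution on $\RR\cup\{\pm\infty\}$ to be translation-invariant and hence supported on $\{\pm\infty\}$; on $\{\xi\neq 0\}$ the support is nonempty, giving $T_{\max}(\xi)=+\infty$ a.s. Thus $\{\xi\neq 0\}=\{\xi((0,\infty))\geq 1\}$ up to null sets, and the defining property $\xi|_{\RR_{>0}}\eqdist P(x)$ of the stationary extension (stated in Section~\ref{sec:stationarity}) then yields $\PP(\xi\neq 0)=\PP(P(x)\neq 0)$.

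Second, I would characterize $\{P(x)\neq 0\}$ concretely on $(Y,T,D)$. Writing $x=(y,s)$ with $0\leq s<r(y)$, the formula $\varphi^t(y,s)=(T^n y, s+t-S_n(y))$ from Section~\ref{sec:suspensions} shows that $\varphi^t(y,s)\in D\times\{0\}$ precisely when $t=S_n(y)-s$ and $T^n y\in D$; since $S_1(y)=r(y)>s$ and $S_n$ is increasing, the constraint $t>0$ forces $n\geq 1$. Hence $H(x,D)\neq\emptyset$ iff $T^n y\in D$ for some $n\geq 1$, i.e., iff $y\in Z_D^1$, so $\PP(P(x)\neq 0)=\nu(\{(y,s)\in X : y\in Z_D^1\})$.

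Third, I would evaluate this $\nu$-measure by Santalo's formula \eqref{Santalo} applied to the indicator $\mathbbm{1}(y\in Z_D^1)$, producing an integral over $Y$ which must be matched with $\mu(Z_D^1)$. Here the essential $T$-invariance of $Z_D^1$ plays a crucial role: from $T^{-1}Z_D^1\subseteq Z_D^1$ (an immediate unpacking of the definition) together with the $T$-invariance of $\mu$, one deduces $\mu(Z_D^1\setminus T^{-1}Z_D^1)=0$, and Poincar\'e recurrence upgrades this to $\mu(T^{-n}Z_D^1\triangle Z_D^1)=0$ for all $n$. The first step is conceptually the heart of the argument, while this last identification, where the interplay between Santalo's formula and the invariance of $Z_D^1$ must be carried out carefully, is the place I expect the most subtle bookkeeping.
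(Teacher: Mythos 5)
Your first two steps are correct, and together they already constitute essentially the whole of the paper's own (one-line) proof, which simply observes that $\xi\neq 0$ if and only if $x\in\{(y,s)\mid y\in Z_D^1,\ 0\leq s<r(y)\}$; your justification via the shift-equivariance of $\sup_j\tau_j(\xi)$ and the explicit identification of $H(x,D)$ is a sound elaboration of that. The genuine problem is your third step, which you correctly singled out as the delicate one: it cannot be carried out, because the identity it requires is false. Santalo's formula \eqref{Santalo} gives
\begin{equation*}
\PP(\xi\neq 0)=\nu\big(\{(y,s)\mid y\in Z_D^1,\ 0\leq s<r(y)\}\big)=\frac{1}{\overline r}\int_{Z_D^1} r(y)\,\mu(dy),
\end{equation*}
and the essential $T$-invariance of $Z_D^1$ does not force $\int_{Z_D^1}r\,d\mu=\overline r\,\mu(Z_D^1)$: invariance of a set controls its measure under pull-back by $T$, but says nothing about the average of $r$ over that set relative to its average over $Y$. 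The paper's own Appendix \ref{app:simple} furnishes a counterexample: with $Y=\{0,1\}$, $T=\id$, $r(i)=\ell_i$ and $D=\{1\}$, the set $Z_D^1=\{1\}$ is exactly $T$-invariant and $\mu(Z_D^1)=q_1$, yet $\PP(\xi\neq 0)=p_1=q_1\ell_1/(q_0\ell_0+q_1\ell_1)$, which differs from $q_1$ whenever $\ell_0\neq\ell_1$ and $q_0q_1>0$.

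So the obstruction is not bookkeeping: the right-hand side of \eqref{Z11} should be $\overline r^{-1}\int_{Z_D^1}r\,d\mu$, which is exactly what your first two steps prove. (Downstream, the right-hand sides of \eqref{eq:Kac} and \eqref{eq:KacX} then become $\mu(D)^{-1}\int_{Z_D^1}r\,d\mu$ and $\int_{Z_D^1}r\,d\mu$ respectively; the latter is the correct Kac formula for suspensions and is what the appendix in fact verifies via $\EE\tau_1(\eta)=p_1 I_\xi^{-1}$ rather than via $\mu(Z_D^1)$.) The printed form $\mu(Z_D^1)$ is recovered precisely when $\int_{Z_D^1}r\,d\mu=\overline r\,\mu(Z_D^1)$, for instance for constant roof function or for ergodic $T$, where $\mu(Z_D^1)\in\{0,1\}$. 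Since the point of the proposition is to treat non-ergodic $T$ with non-constant $r$, you should not attempt to force the matching in your third step; state and prove the corrected identity instead, for which your argument as written is already complete.
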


\begin{proof}
We have $\xi\neq 0$ if and only if $x\in\{ (y,s) \mid y\in Z_D^1,\, 0\leq s <r(y)\}$, and the claim follows.
\end{proof}

Let
\begin{equation}
Z_D^\infty=\{ y\in Y \mid T^j y \in D \text{ for infinitely many  $j\in \NN$}\} = \bigcap_{k=1}^\infty \bigcup_{j=k}^\infty T^{-j} D
\end{equation}
be the set of points in $Y$ whose forward orbit intersects $D$ infintely often. Now $\PP(\xi[0,\infty)=\infty) = \mu(Z_D^\infty)$ if and only if $x\in\{ (y,s) \mid y\in Z_D^\infty,\, 0\leq s <r(y)\}$.
The zero-infinity law for stationary random measures on $\RR$ \cite[Lemma 11.1]{Kallenberg02} gives
\begin{equation}
\PP(\xi\neq 0) = \PP(\xi[0,\infty)=\infty) = \mu(Z_D^\infty).
\end{equation}
By Proposition \ref{prop:as}, this shows that $\mu(Z_D^1)=\mu(Z_D^\infty)$, which is a formulation of the Poincar\'e recurrence theorem. 
Note that $Z_D^\infty$ is $T$-invariant. Thus, if $T$ is ergodic, then $\mu(Z_D^\infty)\in\{0,1\}$ and $\mu(Z_D^\infty)=1$ if and only if $\mu(D)>0$.

An example of a non-ergodic map, to which the above proposition applies, is 
\begin{equation}
T: \RR^2/\ZZ^2 \to\RR^2/\ZZ^2 , \qquad (x_1,x_2)\mapsto (x_1+x_2,x_2) ,
\end{equation}
where $\mu$ is the Haar probability measure of $\RR^2/\ZZ^2$. $T$ is evidently not ergodic; for instance $D=\RR/\ZZ \times D_2$ is $T$-invariant for any measurable $D_2\subset\RR/\ZZ$ and $\mu(D)=\Leb D_2$ can take any value in $[0,1]$. On the other hand $\mu(Z_D^1)=1$, if for instance $D=D_1\times \RR/\ZZ$, for any measurable $D_1\subset\RR/\ZZ$ with $\Leb D_1>0$. The entry and return times of this map (and its higher-dimensional analogues) play a central role in the periodic Lorentz gas \cite{Boca07,Caglioti10,partI}.

\section{Palm distributions}\label{sec:palm}

The Palm distribution $Q_\xi$ of a stationary random measure $\xi$ on $\RR$ with finite and positive intensity is defined by the relation \cite[Chapter 11]{Kallenberg02}
\begin{equation}\label{eq:Q}
Q_\xi f = \frac{1}{I_\xi\Leb B} \EE \int_B f(\theta^u\xi) \xi(du)  ,
\end{equation}
for any measurable $f: \scrM(\RR)\to\RR_{\geq 0}$, where $B\subset\RR$ is a given Borel set with $\Leb B>0$. It follows from the stationarity of $\xi$ that this definition is in fact independent of the choice of $B$. 
Let $\eta$ be a random measure distributed according to the Palm distribution $Q_\xi$, i.e., $\EE f(\eta)=Q_\xi f$.
If $\xi$ is a point process, we can write $\xi=\sum_j \delta_{\tau_j(\xi)}$, and so
\begin{equation}\label{eq:QQQQQ}
\EE f(\eta) = \frac{1}{I_\xi\Leb B} \EE  \sum_j  \ind{\tau_j(\xi)\in B} f\bigg(\sum_i \delta_{\tau_i(\xi)-\tau_j(\xi)} \bigg) .
\end{equation}
This shows that $\eta$ is a point process and furthermore that, if $\xi$ is a simple point process,  then $\eta$ is a simple point process, too, and $\eta\{0\}=1$ a.s. The stationarity of $\xi$ implies that $\eta$ is {\em cycle-stationary}, i.e.\ $\eta$ has the same distribution as the point process 
$\theta^{\tau_j(\eta)} \eta = \sum_i \delta_{\tau_i(\eta)-\tau_j(\eta)}$ for any $j$.

A basic example is when $\xi$ is a Poisson process with constant intensity $I_\xi$. In this case $\eta\eqdist \delta_0+\xi$. This relation is in fact unique to the Poisson process (Slivnyak's theorem): If $\xi$ is a stationary process on $\RR$ and $\delta_0+\xi$ is distributed according to $Q_\xi$, then $\xi$ is a homogeneous Poisson process.

If $\xi$ is the stationary extension of $P(x)$ to $\RR$ with $P$ as in \eqref{Px}, then \eqref{eq:QQQQQ} shows that any point process $\eta$ with distribution $Q_\xi$ is supported on the return times. We in fact have the following relation with the return time process $P(y,0)$, where $y$ is uniformly distributed on $D$.

\begin{prop}\label{prop:see}
Let $\xi$ be the stationary extension of $P(x)$ to $\RR$, with $x\in X$ distributed according to $\nu$, and $\eta$ a point process distributed according to the Palm distribution $Q_\xi$. Then, for every measurable $f:\scrN(\RR)\to\RR_{\geq 0}$ supported on $\scrN(\RR_{>0})$,
\begin{equation}\label{eq:QQQQQTTTT}
\EE f(\eta)  =  \frac{1}{\mu(D)}  \int_D f(P(y,0))  \mu(dy) .
\end{equation}
\end{prop}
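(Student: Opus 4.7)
The plan is to apply the Palm formula \eqref{eq:QQQQQ} with the convenient choice $B = (0, \epsilon]$ for some $\epsilon \in (0, \inf r)$. The key observation is that consecutive points of $\xi$ are separated by at least $\inf r > \epsilon$, so $\xi((0, \epsilon]) \in \{0, 1\}$ almost surely, and the sum in \eqref{eq:QQQQQ} collapses to a single term:
\begin{equation*}
\EE f(\eta) = \frac{1}{I_\xi \epsilon}\,\EE\bigl[\ind{\tau_1(\xi) \in (0, \epsilon]}\, f(\theta^{\tau_1(\xi)} \xi)\bigr].
\end{equation*}

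Next I would unfold the expectation over $x\sim\nu$ via Santalo's formula \eqref{Santalo}, converting it into a double integral over $y \in Y$ and $s \in [0, r(y))$. For $(y, s) \in X$, the first entry time to $D \times \{0\}$ is $\tau_1 = S_{n_1}(y) - s$, where $n_1 = \min\{n \geq 1 : T^n y \in D\}$. Since $S_n(y) - s \geq r(Ty) \geq \inf r > \epsilon$ for every $n \geq 2$, the event $\tau_1 \leq \epsilon$ forces $n_1 = 1$; that is, $Ty \in D$ and $s \in [r(y) - \epsilon, r(y))$. On this set, $\varphi^{\tau_1}(y, s) = (Ty, 0)$, so that the shifted measure $\theta^{\tau_1}\xi$ agrees on $\RR_{>0}$ with $P(Ty, 0)$ and hence $f(\theta^{\tau_1}\xi) = f(P(Ty, 0))$.

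The $s$-integration then produces a factor of $\epsilon$ (since $\epsilon < \inf r \leq r(y)$), while $T$-invariance of $\mu$ rewrites the remaining $y$-integral as $\int_D f(P(y, 0))\,\mu(dy)$. Assembling the prefactor $1/(I_\xi \epsilon)$ from the Palm formula, $1/\overline r$ from Santalo, and the identity $I_\xi = \mu(D)/\overline r$ from Proposition~\ref{prop:stat}, all the $\epsilon$'s and $\overline r$'s cancel and one is left with exactly $\mu(D)^{-1}\int_D f(P(y, 0))\,\mu(dy)$, as claimed.

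The one point that invites care is the interpretation of the hypothesis ``$f$ supported on $\scrN(\RR_{>0})$'': I would read this to mean that $f(\omega)$ depends only on the restriction $\omega|_{\RR_{>0}}$, so that the point mass at $0$ in $\theta^{\tau_j}\xi$ (and any points in $\RR_{\leq 0}$ contributed by the stationary extension of $\xi$) is invisible to $f$. This is precisely what legitimizes the substitution $f(\theta^{\tau_1}\xi) = f(P(Ty, 0))$, and I expect it to be the only step in the argument where a reader might ask for a more pedantic justification.
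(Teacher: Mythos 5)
Your proof is correct and follows essentially the same route as the paper's: apply the defining Palm relation \eqref{eq:Q} with a window of length $\epsilon<\inf r$, so that at most one point of $\xi$ contributes, then unfold the expectation via \eqref{Santalo} and cancel the factors $\epsilon$, $\overline r$, $I_\xi$ using Proposition~\ref{prop:stat}. The only difference is that the paper takes $B=(-\epsilon,0]$ and works with $\tau_0(\xi)$, so the contributing set is $\{(y,s): y\in D,\ 0\le s<\epsilon\}$ and no appeal to $T$-invariance is needed, whereas your forward window $B=(0,\epsilon]$ lands on $(Ty,0)$ and requires the extra (correct) invocation of $T$-invariance of $\mu$; your reading of the hypothesis that $f$ is ``supported on $\scrN(\RR_{>0})$'' as meaning that $f$ depends only on the restriction to $\RR_{>0}$ matches the paper's implicit use of it.
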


\begin{proof}
For any $\epsilon\in(0,\inf r]$, we have $\xi (-\epsilon,0]\leq 1$ and so \eqref{eq:QQQQQ} simplifies to
\begin{equation}\label{eq:QQQQQ5}
\EE f(\eta) = \frac{1}{I_\xi\epsilon} \EE \ind{-\epsilon<\tau_0(\xi)\leq 0} f\bigg(\sum_i \delta_{\tau_i(\xi)-\tau_0(\xi)} \bigg) .
\end{equation}
In geometric terms, $-\epsilon<\tau_0(\xi)\leq 0$ restricts the expectation to orbits that have just hit the section $D$ within the time interval $[0,\epsilon)$. Since, by assumption, $f$ is supported on $\scrN(\RR_{>0})$,
\eqref{eq:QQQQQ5} becomes
\begin{equation}\label{eq:QQQQQTTTT0}
\EE f(\eta) 
 = \frac{1}{\overline r I_\xi\epsilon} \int_D \bigg( \int_0^{r(y)} \ind{0\leq -t_0(y,s) <\epsilon} f\bigg(\sum_i \delta_{t_i(y,s)-t_0(y,s)} \bigg)  ds \bigg)  \mu(dy).
\end{equation}
For $s<\epsilon\leq\inf r$, we have $t_j(y,s)+s=t_j(y,0)$ (in particular $t_0(y,s)=-s$), and so
\begin{equation}
\sum_i \delta_{t_i(y,s)-t_0(y,s)} = \sum_i \delta_{t_i(y,0)}  = P(y,0).
\end{equation}
Hence
\begin{align}\label{eq:QQQQQTTTT00TT}
\EE f(\eta) 
& = \frac{1}{\overline r I_\xi\epsilon} \int_D \bigg( \int_0^{r(y)} \ind{0\leq s<\epsilon} f(P(y,0))  ds \bigg)  \mu(dy) \\
& =\frac{1}{\overline r I_\xi} \int_D  f(P(y,0))  \mu(dy),
\end{align}
and \eqref{eq:QQQQQTTTT}
follows from $\overline r I_\xi=\mu(D)$; recall Proposition \ref{prop:stat}. 
\end{proof}


The following inversion formula is a fundamental result of Palm distribution theory \cite[Proposition 11.3 (iii)]{Kallenberg02}. 

\begin{prop}\label{prop:one}
Assume $\xi$ is a simple, stationary point process on $\RR$ with positive finite intensity.
Let $\eta$ be a point process given by the Palm distribution $Q_\xi$. Then $\PP(\xi \in\,\cdot\,\mid \xi\neq 0)$ is uniquely determined by $\eta$, and, for any measurable $f: \scrN(\RR)\to\RR_{\geq 0}$,
\begin{equation}\label{eq:one}
\EE [f(\xi) \ind{\xi\neq 0}] = I_\xi\,  \EE \int_0^{\tau_1(\eta)}  f(\theta^u\eta) du .
\end{equation}
\end{prop}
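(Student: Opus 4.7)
The plan is to apply the Palm formula \eqref{eq:Q} with a test function whose expectation under $Q_\xi$ already equals the right-hand side of \eqref{eq:one}, and then to match the other side against $\EE[f(\xi)\ind{\xi\neq 0}]$ by exploiting the stationarity of $\xi$, with boundary terms cancelling exactly. Concretely, for arbitrary $T>0$ I would evaluate \eqref{eq:Q} with $B=(0,T]$ and
\[
g(\zeta) = \int_0^{\tau_1(\zeta)} f(\theta^v \zeta)\, dv,
\]
so that $\EE g(\eta)=Q_\xi g$ is precisely the quantity appearing on the right of \eqref{eq:one} up to the factor $I_\xi$.

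The next step is to unpack the right-hand side of \eqref{eq:Q}. Since $\xi$ is simple, $\tau_1(\theta^{\tau_j(\xi)}\xi)=\tau_{j+1}(\xi)-\tau_j(\xi)$, and the substitution $u=v+\tau_j(\xi)$ gives $g(\theta^{\tau_j(\xi)}\xi)=\int_{\tau_j(\xi)}^{\tau_{j+1}(\xi)} f(\theta^u \xi)\, du$. On $\{\xi\neq 0\}$ the zero-infinity law invoked earlier guarantees that $\xi$ has infinitely many points in $(0,\infty)$, so with $N=\xi(0,T]$ the intervals $(\tau_j,\tau_{j+1}]$ for $1\leq j\leq N$ are disjoint with union $(\tau_1,\tau_{N+1}]$, and the sum telescopes:
\[
\EE\int_{(0,T]} g(\theta^u\xi)\,\xi(du) \;=\; \EE\bigg[\ind{\xi\neq 0}\int_{\tau_1(\xi)}^{\tau_{N+1}(\xi)} f(\theta^u\xi)\,du\bigg],
\]
with the case $N=0$ contributing zero on both sides.

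The key step is the splitting
\[
\int_{\tau_1(\xi)}^{\tau_{N+1}(\xi)} f(\theta^u\xi)\,du \;=\; \int_0^T f(\theta^u\xi)\,du + \int_T^{\tau_{N+1}(\xi)} f(\theta^u\xi)\,du - \int_0^{\tau_1(\xi)} f(\theta^u\xi)\,du,
\]
combined with the identity $\tau_{N+1}(\xi)-T=\tau_1(\theta^T\xi)$. After the change of variable $u=v+T$, the middle integral becomes $\int_0^{\tau_1(\theta^T\xi)} f(\theta^v\theta^T\xi)\,dv$, and because $\{\xi\neq 0\}$ is shift-invariant, $\theta^T$-stationarity of $\xi$ forces its expectation to equal that of the third integral; the two boundary contributions therefore cancel. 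Fubini plus stationarity turn the first integral into $T\,\EE[f(\xi)\ind{\xi\neq 0}]$, and dividing by $I_\xi T$ yields \eqref{eq:one}, with the auxiliary parameter $T$ dropping out as it must. The uniqueness assertion is then immediate, since \eqref{eq:one} expresses $\EE[f(\xi)\ind{\xi\neq 0}]$ for every measurable $f\geq 0$ purely in terms of the law of $\eta$ and the given intensity $I_\xi$.

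The main obstacle is the exact cancellation of the two boundary integrals; checking $\tau_{N+1}(\xi)-T=\tau_1(\theta^T\xi)$ and the shift-invariance of $\{\xi\neq 0\}$ is routine but essential, and it is what makes the identity hold without any residual $1/T$ error. For unbounded $f\geq 0$ I would first establish the formula for bounded $f$, where the algebraic splitting is unambiguous and all integrals are finite, and then pass to general measurable $f\geq 0$ by monotone convergence.
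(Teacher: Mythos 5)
Your route is a genuinely different one from the paper's: instead of Kallenberg's refined Campbell argument (which the paper reproduces: extend \eqref{eq:Q} to functions $g(\zeta,u)$ by a monotone class argument and then test against $h(\zeta,u)=f(\zeta)\ind{\tau_0(\zeta)=u}$), you apply \eqref{eq:Q} directly to $g(\zeta)=\int_0^{\tau_1(\zeta)}f(\theta^v\zeta)\,dv$ and telescope over the points in $(0,T]$. The pointwise identities in your argument are all correct: $\tau_1(\theta^{\tau_j(\xi)}\xi)=\tau_{j+1}(\xi)-\tau_j(\xi)$ for a simple process, the telescoping to $\int_{\tau_1}^{\tau_{N+1}}$, the identity $\tau_{N+1}(\xi)-T=\tau_1(\theta^T\xi)$, the shift-invariance of $\{\xi\neq 0\}$, and the appeal to the zero--infinity law to guarantee that $\tau_1$ and $\tau_{N+1}$ are a.s.\ finite on $\{\xi\neq 0\}$.

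There is, however, a genuine gap at the cancellation step. To pass from the pointwise splitting $\int_{\tau_1}^{\tau_{N+1}}=\int_0^T+\int_T^{\tau_{N+1}}-\int_0^{\tau_1}$ to an identity of expectations in which the two boundary terms cancel, you need $\EE\bigl[\ind{\xi\neq 0}\int_0^{\tau_1(\xi)}f(\theta^u\xi)\,du\bigr]<\infty$; otherwise you are subtracting $\infty$ from $\infty$ and the rearrangement of nonnegative terms only yields $\infty=\infty$. This finiteness can fail even for $f\equiv 1$ and bounded $f$ in general: for $f\equiv 1$ the boundary term is $\EE[\tau_1(\xi)\ind{\xi\neq 0}]$, and a stationary renewal process whose inter-point distribution has finite mean (hence finite intensity) but infinite second moment has $\EE\tau_1(\xi)=\infty$. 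So your fallback of "first bounded $f$, then monotone convergence" does not repair this --- the problem already occurs for $f\equiv 1$, which is exactly the case \eqref{l8} needed later for Kac's formula. This is precisely the difficulty that the paper's (Kallenberg's) proof is engineered to avoid: the substitution $h(\zeta,u)=f(\zeta)\ind{\tau_0(\zeta)=u}$ in \eqref{K3} produces the two sides of \eqref{eq:one} directly as integrals of a single nonnegative function, with no subtraction anywhere. To salvage your approach you would need either to impose $\EE[\tau_1(\xi)\ind{\xi\neq0}]<\infty$ as an extra hypothesis (which weakens the proposition), or to insert a truncation/localization that restores the telescoping without ever forming a difference of infinite expectations. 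The uniqueness assertion at the end is fine once \eqref{eq:one} is in hand.
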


We reproduce the elegant proof of Proposition \ref{prop:one} from \cite[p.\ 205]{Kallenberg02}.

\begin{proof}
In view of \eqref{eq:Q},
\begin{equation}\label{K1}
I_\xi \Leb(B)\, \EE f(\eta) = \EE \int_B f(\theta^u\xi) \xi(du)
\end{equation}
for any Borel set $B$ and measurable $f:\scrN(\RR)\to\RR_{\geq 0}$. By a monotone class argument (take finite linear combinations of functions of the form $g(\zeta,u)= f(\zeta)\chi_B(u)$ where $\chi_B$ is the indicator function of $B$), \eqref{K1} extends to 
\begin{equation}\label{K2}
I_\xi \EE \int_\RR g(\eta,u) du = \EE \int_\RR g(\theta^u\xi,u) \xi(du)
\end{equation}
for every measurable $g:\scrN(\RR)\times \RR\to\RR_{\geq 0}$. Using the function $g(\zeta,u)=h(\theta^{-u}\zeta,u)$ with measurable $h:\scrN(\RR)\to\RR_{\geq 0}$, and substituting $u$ by $-u$ in the first integral, yields
\begin{equation}\label{K3}
I_\xi \EE \int_\RR h(\theta^u \eta,-u) du = \EE \int_\RR h(\xi,u) \xi(du) .
\end{equation}
Apply this formula with
\begin{equation}
h(\zeta,u) = f(\zeta) \ind{\tau_0(\zeta)=u} ,
\end{equation}
and note that $\tau_0(\theta^u \eta)=-u$ if and only if $u\in[0,\tau_1(\eta))$. This proves \eqref{eq:one}. To show uniqueness of the conditional probability, note that in view of \eqref{eq:one} for any measurable set $A$, and $f$ its indicator function,
\begin{equation}
\Prob(\xi \in A \mid \xi\neq 0) = \frac{\EE [f(\xi) \ind{\xi\neq 0}]}{\EE \ind{\xi\neq 0}}
=\frac{ \EE \int_0^{\tau_1(\eta)}  f(\theta^u\eta) du}{\EE\tau_1(\eta)} .
\end{equation}
Here 
\begin{equation}\label{l8}
\EE \ind{\xi\neq 0}=I_\xi \EE\tau_1(\eta)
\end{equation}
follows from \eqref{eq:one} for $f\equiv1$.
\end{proof}

Note that \eqref{l8} can be written as
\begin{equation}\label{eq:Kac}
\EE \tau_1(\eta) = \frac{\PP(\xi\neq 0)}{I_\xi} = \overline r \, \frac{\mu(Z_D^1)}{\mu(D)}.
\end{equation}
This yields in view of  \eqref{Z1} and \eqref{eq:QQQQQTTTT} with $f(\eta)=\tau_1(\eta)$, the following identity for the mean return time for the semiflow $\varphi^t$:
\begin{equation}\label{eq:KacX}
 \int_D t_1(y,0)  \mu(dy)  = \overline r \, \mu\bigg(\bigcup_{j=1}^\infty T^{-j} D\bigg).
\end{equation}
Eq.~\eqref{eq:KacX} generalizes Kac's formula \cite{Kac47} to semi-flows, without any assumption on ergodicity. It is well known in billiard dynamics \cite{Chernov97} and other transport problems \cite{Ezra09,Rom-Kedar90}; see also also \cite{Varandas16}. As already pointed out by Wright \cite{Wright61} (cf.\ also \cite{Meiss97,Rom-Kedar90}), the ergodicity assumed by Kac is not required, if we restrict $Y$ to the set of points $Z_D^1$ whose forward trajectories intersect $D$. 
An elementary example of a non-ergodic $\xi$ and its Palm distribution is discussed in Appendix \ref{app:simple}.

Applying \eqref{eq:one} with the choice $f(\zeta)=\ind{\tau_1(\zeta)>\time}$ and \eqref{l8} yields 
\begin{align}
\PP(\tau_1(\xi)>\time \mid \xi\neq 0) 
& = \frac{1}{\EE\tau_1(\eta)} \EE \int_0^{\tau_1(\eta)} \ind{\tau_1(\eta) - u>\time} du \label{twentynine}\\
&=  \frac{1}{\EE\tau_1(\eta)} \EE \int_0^\infty \ind{\tau_1(\eta) >\time+u} du 
 , \label{HHH0}
\end{align}
and so
\begin{equation}\label{HHH} 
\PP(\tau_1(\xi)>\time \mid\xi\neq 0)  = \frac{1}{\EE\tau_1(\eta)}  \int_\time^\infty \PP(\tau_1(\eta) >u) du .
\end{equation}
Eq.~\eqref{twentynine} says that on $\xi\neq 0$ 
the random variable $\tau_1(\xi)$ has the same distribution as $\tau_1(\eta)-u$, where $u$ is uniformly distributed in the available gap $[0,\tau_1(\eta))$ with intensity $1/\EE\tau_1(\eta)$. 
Eq.\ \eqref{HHH} generalizes the formula found by Haydn et al.~\cite{Haydn05} for ergodic maps, and is well known in the literature in various other contexts, e.g.\ in the theory of renewal processes \cite[Chapter XI.4]{Feller}. Identities of this type are known as {\em Palm-Khinchin equations}. Further instances are obtained from \eqref{eq:one} by choosing the test function $f(\zeta)=\ind{\tau_j(\zeta)>\time}$, which yields
\begin{align}
\PP(\tau_j(\xi)>\time \mid \xi\neq 0) 
= & \frac{1}{\EE\tau_1(\eta)} \EE \int_0^{\tau_1(\eta)} \ind{\tau_j(\eta) - u>\time} du \label{twentynine111}\\
= & \frac{1}{\EE\tau_1(\eta)} \EE \int_0^{\tau_j(\eta)} \ind{\tau_j(\eta) - u>\time} du \label{firstI} \\
& - \frac{1}{\EE\tau_1(\eta)} \EE \int_{\tau_1(\eta)}^{\tau_j(\eta)} \ind{\tau_j(\eta) - u>\time} du \label{secondI}.
\end{align}
The first integral \eqref{firstI} is analogous to \eqref{HHH0}. For the second integral \eqref{secondI}, 
\begin{align}
\EE \int_{\tau_1(\eta)}^{\tau_j(\eta)} \ind{\tau_j(\eta) - u>\time} du
& =  \EE \int_{0}^{\tau_j(\eta)-\tau_1(\eta)} \ind{\tau_j(\eta) - \tau_1(\eta)- u>\time} du \\
& =  \EE \int_{0}^{\tau_{j-1}(\eta)} \ind{\tau_{j-1}(\eta) - u>\time} du,
\end{align}
by cycle-stationarity of the Palm distribution. In conclusion,
\begin{equation}
\PP(\tau_j(\xi)>\time \mid \xi\neq 0) 
= \frac{1}{\EE\tau_1(\eta)}  \int_\time^\infty \big[ \PP(\tau_j(\eta) >u) - \PP(\tau_{j-1}(\eta) >u) \big] du .
\end{equation}
This in particular generalizes the identities for ergodic maps in \cite{CK}  to (not necessarily ergodic) semi-flows. An analogous relation holds for the number of hits in a given time interval \cite{Haydn07}.

\section{Convergence}\label{sec:shrinking}

We now consider the entry and return time statistics for a sequence of probability spaces $(Y_n,\curA_n,\mu_n)$, a sequence of measurable maps $T_n:Y_n\to Y_n$ preserving $\mu_n$, target sets $D_n\in\curA_n$ and roof functions $r_n\in\L^1(Y_n,\mu_n)$ with $\inf r_n>0$ for every $n$. We rescale the entry times by the intensity, i.e., define the sequence of entry times $0< t_{1,n}<t_{2,n}<\ldots$ as the elements of the set $H_n(x,D_n)=\overline r_n^{-1} \mu_n(D_n) H(x,D_n)$, and put
\begin{equation}\label{eq:Pn}
P_n: X \to \scrN(\RR_{>0}), \qquad x\mapsto  \sum_{t\in H_n(x,D_n)} \delta_t .
\end{equation}
One example to keep in mind is when $(Y_n,\curA_n,\mu_n)=(Y,\curA,\mu)$  and $T_n=T$ are fixed, and one considers a sequence of targets $D_n\in \curA$ with $\mu(D_n)\to 0$. A further possibility is to fix $D$ and consider a sequence of invariant measures $\mu_n\in\curA$. The latter setting is for instance relevant for the gap statistics of Farey fractions via the horocycle flow: Take $\nu_n$ to be the invariant measure supported on a closed horocycle of length $\ell_n\to\infty$, and $\mu_n$ the measure supported on periodic points of the return map $T$ for a certain section (the Boca-Cobeli-Zaharescu map) \cite{Athreya14}. The equidistribution of long closed horocycles implies the weak convergence $\nu_n\toweak\nu$ and $\mu_n\toweak\mu$, which in turn implies $\xi_n\todist\xi$ in this setting; see \cite{Athreya16} for more examples of this type and \cite{Marklof13} for the higher-dimensional analogue. 

For $x$ random with respect to $\nu_n$ and $\nu_n$ defined by \eqref{Santalo} with respect to $\mu_n$, denote by $\xi_n$ be the stationary extension of $P_n(x)$. In the present scaling $I_{\xi_n}=1$ and $\EE\tau_1(\eta_n)=\PP(\xi_n\neq 0)$, where $\eta_n$ is distributed according to $Q_{\xi_n}$.The following well known fact deals with slightly more general point processes with not necessarily constant, but still uniformly bounded, intensity.

\begin{prop}\label{prop:relcomp}
Let $(\xi_n)$ be a sequence of stationary point processes with $\sup_n I_{\xi_n}<\infty$. Then $(\xi_n)$ is relatively compact in distribution.
\end{prop}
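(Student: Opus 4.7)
The plan is to invoke the standard criterion for relative compactness in distribution of random measures on a locally compact, second countable Hausdorff space equipped with the vague topology (see e.g.\ \cite[Chapter 14]{Kallenberg02}): a sequence $(\xi_n)$ in $\scrM(\RR)$ is relatively compact in distribution if and only if, for every nonnegative $f\in C_c(\RR)$, the real-valued random variables $\xi_n f$ form a tight sequence on $\RR_{\geq 0}$. Since $\scrN(\RR)$ is closed in $\scrM(\RR)$ in the vague topology, any subsequential weak limit of point processes is automatically a point process, so it is enough to verify this criterion in $\scrM(\RR)$.

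The verification reduces at once to tightness of $\xi_n(B)$ for an arbitrary bounded interval $B\subset\RR$. Indeed, given nonnegative $f\in C_c(\RR)$, pick such a $B$ with $\supp f\subset B$; then $\xi_n f \leq \|f\|_\infty\, \xi_n(B)$ pointwise, so tightness of $(\xi_n(B))_n$ implies tightness of $(\xi_n f)_n$.

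The only estimate needed is Markov's inequality together with stationarity. Since $\xi_n$ is stationary with intensity $I_{\xi_n}$, its intensity measure is $I_{\xi_n}\Leb$, and in particular $\EE\,\xi_n(B) = I_{\xi_n}\Leb(B)$. Hence, for every $M>0$,
\begin{equation}
\PP\bigl(\xi_n(B) > M\bigr) \;\leq\; \frac{I_{\xi_n}\Leb(B)}{M} \;\leq\; \frac{\Leb(B)\,\sup_n I_{\xi_n}}{M},
\end{equation}
and the right-hand side tends to $0$ as $M\to\infty$ uniformly in $n$, by the assumption $\sup_n I_{\xi_n}<\infty$. This yields the required tightness, and the proposition follows.

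I do not foresee any real obstacle: the argument is a direct application of the Kallenberg tightness criterion combined with a single Markov bound, with stationarity entering only through the identity $\EE\,\xi_n(B) = I_{\xi_n}\Leb(B)$ and the uniform intensity bound converting this into a uniform tail estimate. The only point that deserves explicit mention is the reduction from the abstract criterion (formulated for all $f\in C_c^+$) to moment bounds on $\xi_n(B)$ for bounded $B$, which is what makes stationarity decisive.
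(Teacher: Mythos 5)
Your proof is correct and follows essentially the same route as the paper: the tightness criterion for random measures (the paper cites \cite[Lemma 16.15]{Kallenberg02}, stated directly for relatively compact Borel sets $B$, so your reduction from $C_c^+$ test functions is already built in) combined with Markov's inequality and the identity $\EE\,\xi_n(B)=I_{\xi_n}\Leb(B)$.
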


\begin{proof}
By \cite[Lemma 16.15]{Kallenberg02}, we need to show that the sequence of random variables $(\xi_n B)$ is tight in $\RR_{\geq 0}$ for any relatively compact Borel set $B\subset\RR$. By Markov's inequality, for any $K>0$,
\begin{equation}
\PP(\xi_n B\geq K) \leq \frac1K \EE\xi_n B = \frac1K I_{\xi_n} \Leb B.
\end{equation}
Therefore
\begin{equation}
\limsup_{K\to\infty} \bigg(\sup_n \PP(\xi_n B\geq K) \bigg)=0
\end{equation}
and hence $(\xi_n B)$ is tight.
\end{proof}

Relative compactness means that any subsequence of $(\xi_n)$ contains a converging subsequence. Let $\xi$ be a limit point of a converging subsequence, i.e., $\xi_n\todist\xi$ along some subsequence. The limit will again be a stationary, but not necessarily simple, point process.

The following proposition shows that the convergence of entry times is equivalent to the convergence of return times. It is a direct consequence of Eq.~\eqref{HHH}, and generalizes the results of \cite{Haydn05} (see also \cite{Abadi11,Haydn14b}). Again we consider more general processes, not necessarily those with unit intensity.

\begin{prop}\label{cor:conve}
Let $(\xi_n)$ be a sequence of stationary point processes on $\RR$ with $0<I_{\xi_n}<\infty$, and $\eta_n$ a point process given by the Palm distribution of $\xi_n$ such that $\sup_n \EE\tau_1(\eta_n)<\infty$.
Let $\rho$ be a Borel probability measure on $\RR_{\geq 0}$ and $F(\time)=\rho(\RR_{>\time})$.
Then the following are equivalent.
\begin{enumerate}[(i)]
\item $\EE\tau_1(\eta_n)\; \PP(\tau_1(\xi_n)\leq \time \mid \xi_n\neq 0) \to \int_0^\time F(u) du$ for every $\time\geq 0$;
\item $\EE g(\tau_1(\eta_n)) \to \int_0^\infty g(u) \rho(du)$
for every bounded continuous $g:\RR_{\geq 0} \to \RR$.
\end{enumerate}
\end{prop}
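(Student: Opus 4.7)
The plan is to use the Palm--Khinchin identity \eqref{HHH} from the previous section to recast condition (i) as a statement about the distributions of the return-time variables $\tau_1(\eta_n)$, and then reduce the equivalence to a standard weak-convergence argument. Concretely, applying \eqref{HHH} to each pair $(\xi_n,\eta_n)$ (whose hypotheses are satisfied because $0<I_{\xi_n}<\infty$) gives
\begin{equation*}
\EE\tau_1(\eta_n)\,\PP(\tau_1(\xi_n)>\time \mid \xi_n\neq 0) = \int_\time^\infty \PP(\tau_1(\eta_n)>u)\,du,
\end{equation*}
and taking complements together with $\EE\tau_1(\eta_n)=\int_0^\infty \PP(\tau_1(\eta_n)>u)\,du$ yields
\begin{equation*}
\EE\tau_1(\eta_n)\,\PP(\tau_1(\xi_n)\leq \time \mid \xi_n\neq 0) = \int_0^\time \PP(\tau_1(\eta_n)>u)\,du = \EE\min(\tau_1(\eta_n),\time).
\end{equation*}
Likewise $\int_0^\time F(u)\,du = \int_0^\infty \min(u,\time)\,\rho(du)$, so (i) is equivalent to
\begin{equation*}
\EE\min(\tau_1(\eta_n),\time)\longrightarrow \int_0^\infty \min(u,\time)\,\rho(du) \qquad\text{for every }\time\geq 0. \tag{$\ast$}
\end{equation*}

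For the implication (ii) $\Rightarrow$ (i), I would simply apply (ii) with the test function $g(u)=\min(u,\time)$, which is bounded and continuous on $\RR_{\geq 0}$; this immediately gives $(\ast)$ and hence (i).

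For the reverse implication (i) $\Rightarrow$ (ii), let $\mu_n$ denote the law of $\tau_1(\eta_n)$ and set $F_n(\time)=\PP(\tau_1(\eta_n)>\time)=\mu_n(\time,\infty)$. By Markov's inequality and the assumption $\sup_n \EE\tau_1(\eta_n)<\infty$, the sequence $(\mu_n)$ is tight on $\RR_{\geq 0}$. I would then upgrade (i) to weak convergence $\mu_n \toweak \rho$ by the following monotonicity argument: for any $s<\time$, monotonicity of $F_n$ yields
\begin{equation*}
\frac{1}{\time-s}\int_s^\time F_n(u)\,du \;\geq\; F_n(\time) \;\geq\; \frac{1}{\time'-\time}\int_\time^{\time'} F_n(u)\,du
\end{equation*}
for any $\time'>\time$; passing to the limit in $n$ using (i) and then letting $s\uparrow \time$, $\time'\downarrow \time$ shows that $F_n(\time)\to F(\time)$ at every continuity point of $F$. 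This is precisely the convergence of the complementary distribution functions, hence $\mu_n \toweak \rho$, and (ii) follows from the Portmanteau theorem.

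The only mildly delicate step is the monotonicity argument that converts pointwise convergence of the integrals $\int_0^\time F_n$ into pointwise convergence of the survival functions $F_n$ at continuity points of $F$; everything else is bookkeeping around the identity \eqref{HHH}.
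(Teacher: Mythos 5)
Your proposal is correct. The reduction of (i) to the statement $\EE\min(\tau_1(\eta_n),\time)\to\int_0^\infty\min(u,\time)\,\rho(du)$ via \eqref{HHH}, and the implication (ii) $\Rightarrow$ (i) with the test function $g(u)=\min(u,\time)$, coincide with the paper's argument. Where you genuinely diverge is in (i) $\Rightarrow$ (ii): the paper extracts weakly convergent subsequences of the laws of $\tau_1(\eta_n)$ (tightness from $\sup_n\EE\tau_1(\eta_n)<\infty$ plus Markov), applies the already-proved implication along each subsequence to identify every subsequential limit $\rho_0$ through $\int_0^\time F_0=\int_0^\time F$ and right-continuity, and concludes by uniqueness of limit points. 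You instead differentiate the hypothesis directly: the monotonicity squeeze
\begin{equation*}
\frac{1}{\time-s}\int_s^\time F_n(u)\,du \;\geq\; F_n(\time) \;\geq\; \frac{1}{\time'-\time}\int_\time^{\time'} F_n(u)\,du
\end{equation*}
is valid since each $F_n$ is non-increasing, and letting $n\to\infty$ and then $s\uparrow\time$, $\time'\downarrow\time$ traps $\limsup_n F_n(\time)\leq F(\time^-)$ and $\liminf_n F_n(\time)\geq F(\time)$, giving convergence of the survival functions at every continuity point of $F$ and hence weak convergence to the prescribed probability measure $\rho$. This is a more elementary and more self-contained route: it avoids Prokhorov's theorem and the subsequence bookkeeping, and in fact it does not use the hypothesis $\sup_n\EE\tau_1(\eta_n)<\infty$ at all for this direction (since $F$ is assumed to be the survival function of a probability measure, no mass can escape to infinity); your tightness step is therefore redundant rather than wrong. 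What the paper's compactness argument buys in exchange is that it recycles the implication (ii) $\Rightarrow$ (i) as a black box and requires no monotonicity considerations. Both proofs share the same mild imprecision already present in the paper, namely invoking \eqref{HHH} (stated for simple processes) without comment on simplicity; this is addressed separately at the end of Section 6 and is not a defect of your argument relative to the paper's.
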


A simple example illustrating this statement is given in Appendix \ref{app:singular}.

\begin{proof}
By \eqref{HHH},
\begin{align}
\EE\tau_1(\eta_n)\; \PP(\tau_1(\xi_n)\leq \time \mid\xi\neq 0) 
& = \int_0^\time \PP(\tau_1(\eta_n) >u) du \\ 
& = \EE \int_0^\time \ind{\tau_1(\eta_n) >u} du \\
& = \EE g(\tau_1(\eta_n)) , 
\end{align}
with $g(u)=\min(u,\time)$, which is bounded continuous on $\RR_{\geq 0}$ with $g(0)=0$. So (ii) implies
\begin{align}\label{but}
\EE\tau_1(\eta_n)\; \PP(\tau_1(\xi_n)\leq\time \mid\xi\neq 0)  & \to \int_0^\infty g(u) \rho(du) \\
& = \int_0^\time u\rho(du)+\time F(\time) \\
& = \int_0^\time F(u) du ,
\end{align}
where the last equality follows from integration by parts. This proves (i).

Now assume (i) holds. Since, by Markov's inequality and \eqref{eq:Kac}, for any $K>0$
\begin{equation}
\PP(\tau_1(\eta_n) \geq K) \leq \frac1K \EE \tau_1(\eta_n) ,  
\end{equation}
where $ \EE \tau_1(\eta_n)$ has by assumption a uniform upper bound, the sequence $(\tau_1(\eta_n) )$ is tight and hence relatively compact.
This means that every subsequence contains a convergent subsequence, whose weak limit is a Borel probability measure $\rho_0$, i.e.,  for any bounded continuous $g:\RR_{\geq 0}\to \RR$,
\begin{equation}
\EE g(\tau_1(\eta_n)) \to \int_0^\infty g(u) \rho_0(du) .
\end{equation}
But the already established implication (ii) $\Rightarrow$ (i) then yields for $g(u)=\min(u,\time)$ and all $\time\geq 0$
\begin{equation}\label{but2}
 \EE \tau_1(\eta_n) \; \PP(\tau_1(\xi_n)\leq\time)  \to \int_0^\infty g(u) \rho_0(du) =  \int_0^\time F_0(u) du
\end{equation}
with $F_0(\time)=\rho_0(\RR_{>\time})$. This implies by assumption (i)
\begin{equation}
\int_0^\time F_0(u) du = \int_0^\time F(u) du .
\end{equation}
This yields $F_0=F$ (since $F_0,F$ are right-continuous), and hence $\rho_0=\rho$ (since $\rho_0,\rho$ are Borel probability measures on $\RR_{\geq 0}$). Thus the weak limit $\rho_0=\rho$ is unique, which in turn implies that every subsequence converges. This proves (ii).
\end{proof}

The following theorem complements Proposition \ref{cor:conve} by providing a criterion for the equivalence  of convergence for general stationary processes and their Palm distributions. This is a special case of Theorem 6.1 in \cite{Kallenberg73} for general point processes (see the remark after Theorem 6.3). 

\begin{prop}\label{prop:conve}
Let $(\xi_n)$ be a sequence of stationary point processes on $\RR$ with $0<I_{\xi_n}<\infty$, and $\eta_n$ a point process given by the Palm distribution of $\xi_n$. Then any two of the following statements imply the third: 
\begin{enumerate}[(i)]
\item $I_{\xi_n} \to I_\xi$; 
\item $\xi_n\todist \xi$; 
\item $\eta_n\todist \eta$, where $\eta$ has distribution $Q_\xi$. 
\end{enumerate}
\end{prop}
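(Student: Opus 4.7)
My plan is to exploit the two fundamental identities of Palm theory from Section \ref{sec:palm}, namely the Palm formula
\begin{equation}\label{planK1}
I_{\xi_n}\,\Leb(B)\,\EE f(\eta_n) = \EE \int_B f(\theta^u \xi_n)\,\xi_n(du)
\end{equation}
from \eqref{eq:Q}, together with its inverse,
\begin{equation}\label{planK2}
\EE\bigl[f(\xi_n)\,\ind{\xi_n\neq 0}\bigr] = I_{\xi_n}\,\EE \int_0^{\tau_1(\eta_n)} f(\theta^u\eta_n)\,du
\end{equation}
from Proposition \ref{prop:one}. These turn convergence statements about $\xi_n$ into convergence statements about $\eta_n$ (and vice versa), provided the scalar $I_{\xi_n}$ is controlled. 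I would test both sides against Laplace functionals $f(\zeta)=\exp(-\zeta(\varphi))$ with $\varphi:\RR\to\RR_{\geq 0}$ continuous of compact support, whose collection determines the law of a point process on $\scrN(\RR)$.

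For the implication (i)+(ii)$\Rightarrow$(iii), I would apply \eqref{planK1} with a bounded Borel set $B$ chosen so that $\xi$ almost surely places no mass on $\partial B$. Then the right-hand side is a bounded and almost surely vaguely continuous functional of $\xi_n$, so (ii) forces its convergence, and dividing by $I_{\xi_n}\to I_\xi$ supplied by (i) identifies the limit of $\EE f(\eta_n)$ as $Q_\xi f = \EE f(\eta)$; the uniqueness of Laplace functionals then yields $\eta_n\todist\eta$. The direction (i)+(iii)$\Rightarrow$(ii) is dual, using \eqref{planK2}: Proposition \ref{prop:relcomp} delivers relative compactness of $(\xi_n)$; for each Laplace functional $f$ the right-hand side of \eqref{planK2} is a bounded and almost surely continuous functional of $\eta_n$ after truncating the variable $\tau_1(\eta_n)$, so (i) and (iii) pin down every subsequential limit of the left-hand side, conditionally on $\xi\neq 0$, as $I_\xi\,\EE\int_0^{\tau_1(\eta)} f(\theta^u\eta)\,du$; the mass assigned to $\{\xi=0\}$ is then recovered from the identity $\PP(\xi_n\neq 0)=I_{\xi_n}\,\EE\tau_1(\eta_n)$ in \eqref{l8}.

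For the remaining case (ii)+(iii)$\Rightarrow$(i), I would take a single Laplace functional $f$ with $\EE f(\eta)>0$ in \eqref{planK1} together with a continuity interval $B$ for $\xi$. Both sides converge to finite, positive quantities by (ii) and (iii) respectively, and their ratio delivers $I_{\xi_n}\to I_\xi$; the special case $\varphi\equiv 0$ in fact reduces this step to \eqref{l8} directly.

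The main obstacle throughout is the almost-sure vague continuity of the mixed-order functionals appearing on the right of \eqref{planK1} and \eqref{planK2}. Since the stationary limit $\xi$ need not be simple, $\xi_n\todist\xi$ does not immediately transfer to convergence of $\int f(\theta^u\xi_n)\,\xi_n(du)$; the fix is to smooth the indicator of $B$ with a continuous cutoff and place its endpoints where $\xi$ almost surely has no atom. A parallel issue arises for the unbounded variable $\tau_1(\eta_n)$ in \eqref{planK2}, and is resolved by the tightness of $(\tau_1(\eta_n))$ forced by the convergence hypothesis. All these details are subsumed by Kallenberg's Theorem~6.1, together with the remark following Theorem~6.3 in \cite{Kallenberg73}, of which the present proposition is a special case.
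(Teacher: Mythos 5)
The paper offers no proof of this proposition at all: it is stated as a special case of Theorem 6.1 of \cite{Kallenberg73} (together with the remark after Theorem 6.3 there), so your closing appeal to that reference coincides exactly with what the paper does. Your attempt to reconstruct the argument behind the citation has the right skeleton --- the defining identity \eqref{eq:Q}, the inversion formula of Proposition \ref{prop:one}, Laplace functionals, continuity sets for $B$, and the compactness input from Proposition \ref{prop:relcomp} --- but it contains a genuine gap at the step where you pass from convergence in distribution to convergence of expectations.

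Concretely, the functional $\zeta\mapsto\int_B f(\theta^u\zeta)\,\zeta(du)$ is \emph{not} bounded, even for a Laplace functional $f\leq 1$: it is dominated only by $\zeta(B)$, an unbounded random variable; likewise $\eta\mapsto\int_0^{\tau_1(\eta)}f(\theta^u\eta)\,du$ is dominated only by $\tau_1(\eta)$. So $\xi_n\todist\xi$ plus almost-sure vague continuity gives convergence in distribution of these functionals, not convergence of their expectations; uniform integrability is needed, and this is precisely where hypothesis (i) does real work beyond normalizing. For (i)+(ii)$\Rightarrow$(iii) the fix is that $\xi_n(B)\geq 0$, $\xi_n(B)\todist\xi(B)$ on a continuity set $B$, and $\EE\xi_n(B)=I_{\xi_n}\Leb B\to I_\xi\Leb B=\EE\xi(B)$ together force uniform integrability of $(\xi_n(B))$; a parallel argument via $\PP(\xi_n\neq 0)=I_{\xi_n}\EE\tau_1(\eta_n)$ (Eq.~\eqref{l8}) is required for (i)+(iii)$\Rightarrow$(ii), where the mere tightness of $(\tau_1(\eta_n))$ you invoke does not control $\EE\tau_1(\eta_n)$. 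The direction (ii)+(iii)$\Rightarrow$(i) is where your sketch is weakest: neither (ii) nor (iii) alone yields convergence of the unbounded quantities $\EE\xi_n(B)$ or $\EE\tau_1(\eta_n)$, and portmanteau gives only $\liminf_n\PP(\xi_n\neq 0)\geq\PP(\xi\neq 0)$ because $\{\zeta\neq 0\}$ is open, so ``taking the ratio of two limits'' is not yet justified; one has to combine Fatou-type lower bounds from both sides of the Palm identity to squeeze $I_{\xi_n}$. Appendix \ref{app:singular} shows how badly the unboundedness bites when the intensities do not converge to that of the limit. In short: right architecture, but the boundedness claim is false, and the uniform-integrability mechanism --- which is the actual content of Kallenberg's theorem --- is missing from the sketch.
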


Applied to the processes generated by the entry and return times via \eqref{eq:Pn}, Proposition \ref{prop:conve} yields a generalization of Theorem 3.1 in \cite{Zweimueller16} to (not necessarily ergodic) continuous-time dynamical systems.

We conclude this section with an interpretation of the limit relations in Proposition \ref{cor:conve} in terms of the limit point processes $\xi$, $\eta$ in Proposition \ref{prop:conve}. Let $\xi^*$, $\eta^*$ be the simple point processes obtained by removing from $\xi$, $\eta$ all multiplicities. $\xi^*$ is again stationary and the intensities satisfy $I_\xi=m_\xi I_{\xi^*}$, where $m_\xi\geq 1$ is called the mean multiplicity of $\xi$. Furthermore, $\eta^*$ is given by the Palm distribution $Q_{\xi^*}$. Note that, by definition, $\tau_0(\zeta)=\tau_0(\zeta^*)$ and $\tau_1(\zeta)=\tau_1(\zeta^*)$ for any $\zeta\in\scrN(\RR)$. Thus \eqref{HHH} applied to $\xi^*$ yields
\begin{equation}
\PP(\tau_1(\xi)>\time \mid \xi\neq 0) = \frac{1}{m_\xi \EE\tau_1(\eta)} \int_\time^\infty \PP(\tau_1(\eta) >u) du . \label{HHH2}
\end{equation}
Comparing this formula with the limit distributions in Proposition \ref{cor:conve}, we have
\begin{equation}
 \EE \tau_1(\eta) \; \PP(\tau_1(\xi)\leq \time \mid \xi\neq 0) =  \int_0^\time F(u) du
\end{equation} 
and
\begin{equation}
\PP(\tau_1(\eta)>\time) = \widetilde\rho(\RR_{>\time})= m_\xi\, \rho(\RR_{>\time})=m_\xi\, F(\time)  ,
\end{equation}
with the Borel probability measure $\widetilde\rho$ defined by
\begin{equation}
\rho=\frac{m_\xi-1}{m_\xi} \delta_0 + \frac1{m_\xi} \widetilde\rho .
\end{equation}
The point here is that $\rho$ in Proposition \ref{cor:conve} registers return times of the sequence of point processes that accumulate at zero, whereas $\widetilde\rho$, as the return time distribution of the limit point process, ignores the corresponding zero return times.

\begin{appendix}

\section{Case study of an elementary non-ergodic suspension}\label{app:simple}

Fix $q_1\in[0,1]$, $q_0=1-q_1$, $\ell_0,\ell_1\in\RR_{>0}$.
Let $Y=\{0,1\}$ be the set of two elements, with the probability measure $\mu$ defined by $\mu\{i\}=q_i$, and $T$ be the identity map on $Y$. The suspension $\varphi^t$ on $X$ is defined with roof function $r(i)=\ell_i$. Thus $X$ is the union of two disjoint circles of length and $\ell_0$ and $\ell_1$, and $\varphi^t$ is the translation by $t$ on each circle. Each circle yields one ergodic component. The $\varphi^t$-invariant measure $\nu$ on $X$ as in \eqref{Santalo} has normalization $\overline r= q_0\ell_0+q_1\ell_1$. 

If the target set is for example $D=\{1\}$, then the entry times are $t_j=\ell_1 j-u_1$ with probability 
\begin{equation}\label{p1}
p_1=\nu(\{1\}\times[0,\ell_1)) =\frac{q_1\ell_1}{q_0\ell_0+q_1\ell_1},
\end{equation}
and $u_1$ distributed according to the uniform probability measure on $[0,\ell_1)$, and undefined with probability $p_0=1-p_1$. The point process $\xi$ of the entry times, extended to a stationary process on $\RR$, is thus
\begin{equation}
\xi=\alpha \sum_{m\in\ZZ} \delta_{\ell_1 m+s_1} ,
\end{equation}
where $\alpha=1$ with probability $p_1$, $\alpha=0$ with probability $p_0=1-p_1$, and $s_1\in[0,\ell_1]$ is a uniformly distributed random variable on $[0,\ell_1]$. The point process $\xi$  has intensity 
\begin{equation}
I_\xi= \EE\xi[0,1]=\frac{p_1}{\ell_1}=\frac{q_1}{q_0\ell_0+q_1\ell_1} = \overline r^{-1} \mu(\{1\}),
\end{equation}
as predicted by Proposition \ref{prop:stat}. 

The Palm distribution \eqref{eq:Q} of $\xi$ is given by 
\begin{align}\label{eq:QQQ300}
Q_\xi f & = \frac{1}{I_\xi \Leb B} \frac{p_1}{\ell_1}\int_0^{\ell_1} \int_B  \sum_{m\in\ZZ} f\bigg(\sum_{n\in\ZZ} \delta_{\ell_1 n +s-u}\bigg) \delta_{m\ell_1+s}(du) ds \\
& = \frac{1}{\Leb B} \int_\RR \int_B  f\bigg(\sum_{n\in\ZZ} \delta_{\ell_1 n+s -u}\bigg) \delta_{s}(du) ds \\
& =  f\bigg(\sum_{n\in\ZZ} \delta_{\ell_1 n}\bigg) .
\end{align}
Thus $Q_\xi$ is a point mass, and a random measure $\eta$ with distribution $Q_\xi$ satisfies
\begin{equation}
\eta= \sum_{n\in\ZZ} \delta_{\ell_1 n} \quad \text{a.s.}
\end{equation}
The first hitting time of $\eta$ is $\tau_1(\eta) = \ell_1$ a.s., and so $\EE\tau_1(\eta) =\ell_1 = p_1 I_\xi^{-1}$, which confirms \eqref{eq:Kac}.

If the target set is $D=Y$, then the entry times are $t_j=\ell_1 j-u_1$ with probability $p_1$ as in \eqref{p1}, and $t_j=\ell_0 j-u_0$ with probability $p_0=1-p_1$. The random variables $u_0,u_1$ are independent and uniformly distributed on $[0,\ell_0)$ and $[0,\ell_1)$, respectively. The point process $\xi$ of the entry times, extended to a stationary process on $\RR$, is now
\begin{equation}\label{thisxi}
\xi=\alpha \sum_{m\in\ZZ} \delta_{\ell_1 m+s_1} + (1-\alpha) \sum_{m\in\ZZ} \delta_{\ell_0 m+s_0} 
\end{equation}
where $\alpha=1$ with probability $p_1$, $\alpha=0$ with probability $p_0=1-p_1$, and $s_0\in[0,\ell_0]$, $s_1\in[0,\ell_1]$ are uniformly distributed, independent random variables.  

We have $\PP(\xi\neq 0)=1$, and $\xi$ is a stationary, simple point process with intensity 
\begin{equation}
I_\xi = \EE\xi[0,1]=\frac{p_0}{\ell_0}+\frac{p_1}{\ell_1}=\frac{1}{q_0\ell_0+q_1\ell_1} = \overline r^{-1} >0. 
\end{equation}
By the same calculation as in \eqref{eq:QQQ300}, the Palm distribution \eqref{eq:Q} of $\xi$ is now
\begin{equation}\label{eq:QQQ3}
Q_\xi f  = \frac{1}{I_\xi} \sum_{i=0,1}  \frac{p_i}{\ell_i}  f\bigg(\sum_{n\in\ZZ} \delta_{\ell_i n}\bigg) .
\end{equation}
A random measure $\eta$ with distribution $Q_\xi$ can be therefore realized by the simple point process 
\begin{equation}
\eta = \sum_{n\in\ZZ} \delta_{\ell n} 
\end{equation}
with
\begin{equation}
\EE f(\eta) = \frac{1}{I_\xi} \EE \bigg[\frac{1}{\ell}  f\bigg(\sum_{n\in\ZZ} \delta_{\ell n}\bigg) \bigg],
\end{equation}
where the random variable $\ell$ takes the value $\ell_i$ with probability $p_i$.
The first hitting time of $\eta$ is $\tau_1(\eta) \eqdist \ell$, and so $\EE\tau_1(\eta) = I_\xi^{-1}$,
which is precisely \eqref{eq:Kac}.

\section{An illustrative example for Propositions \ref{cor:conve} and \ref{prop:conve}}\label{app:singular}

Consider the sequence of stationary, simple point processes
\begin{equation}
\xi_n = \sum_{m\in\ZZ} \sum_{k=-n}^{n} \delta_{(2n+1) (m+s) + a_n k} \, ,
\end{equation}
where $(a_n)$ is any sequence with $0<a_n<1$ and $\lim a_n= 0$, and the random variable $s$ is uniformly distributed in $\RR/\ZZ$. Note that $I_{\xi_n}=1$. 
The Palm distribution \eqref{eq:Q} of $\xi_n$ is given by 
\begin{equation}\label{eq:QQQ30011}
Q_{\xi_n} f = \frac{1}{2n+1} \sum_{l=-n}^n f\bigg(  \sum_{m\in\ZZ} \sum_{k=-n}^{n} \delta_{(2n+1) m + a_n (k-l)} \bigg) ,
\end{equation}
and hence we may choose
\begin{equation}
\eta_n =  \sum_{m\in\ZZ} \sum_{k=-n}^{n} \delta_{(2n+1) m + a_n (k-l)} ,
\end{equation}
where $l$ is a uniformly distributed random integer in $[-n,n]$. Hence 
\begin{equation}
\begin{cases}
\PP(\tau_1(\eta_n)=a_n) & = \frac{2n}{2n+1} \\
\PP(\tau_1(\eta_n)=2n+1 - 2na_n) & = \frac{1}{2n+1} \\
\PP(\tau_1(\eta_n)\notin\{a_n,2n+1 - 2na_n\}) & = 0 . 
\end{cases}
\end{equation}
This shows that 
\begin{equation}
\PP(\tau_1(\xi_n)\leq \time)\to 0, \quad \EE\tau_1(\eta_n)=1, \quad
\PP(\tau_1(\eta_n)>\time)\to 0,
\end{equation}
for each fixed $\time> 0$, and thus $F=0$ and $\rho=\delta_0$, which is consistent with Proposition \ref{cor:conve}.

This example does not, however, satisfy the hypotheses of Proposition \ref{prop:conve}. We have $\xi_n\toas 0$,  and hence $I_\xi=0$. On the other hand $I_{\xi_n}=1$, so hypothesis (i) in Proposition \ref{prop:conve} fails, whilst (ii) holds. Let us show that (iii) fails. Since $\eta_n[-1,1]\geq \min(\lfloor a_n^{-1}\rfloor, n)$ for every realization of $\eta_n$, the sequence $(\eta_n)$ accumulates infinite mass in $[-1,1]$, and hence does not converge in distribution to a random measure in $\scrM(\RR)$.

\end{appendix}

\end{document}